\documentclass{article}

\usepackage{graphicx}
\usepackage{amsmath}
\usepackage{natbib}
\usepackage{amsfonts}
\usepackage{amssymb}  
\usepackage{amsthm}  
\usepackage{subfigure}
\usepackage{verbatim}
\usepackage{color}
\usepackage{soul}
																							
\newcommand{\de}{\text{d}}
\newcommand{\R}{\mathbb{R}}

\newcommand{\N}{\mathbb{N}}

\newtheorem{theorem}{Theorem}

\newtheorem{lemma}{Lemma}
\newtheorem{remark}{Remark}
\newtheorem{example}{Example}

\addtolength{\textwidth}{1.3in}
\addtolength{\hoffset}{-.5in}

\title{Bayesian multivariate mixed-scale density estimation}
\author{Antonio Canale\thanks{Department of Economics and Statistics, University of Turin, and Collegio Carlo Alberto} and David B. Dunson\thanks{Department of Statistical Science, Duke University}}

\begin{document}
\maketitle

\begin{abstract}
Although continuous density estimation has received abundant attention in the Bayesian nonparametrics literature, there is limited theory on multivariate mixed scale density estimation.  In this note, we consider a general framework to jointly model continuous, count and categorical variables under a nonparametric prior, which is induced through rounding latent variables having an unknown density with respect to Lebesgue measure.  For the proposed class of priors, we provide sufficient conditions for large support, strong consistency and rates of posterior contraction. These conditions allow one to convert sufficient conditions obtained in the setting of multivariate continuous density estimation to the mixed scale case. To illustrate the procedure a rounded multivariate nonparametric mixture of Gaussians is introduced and applied to a crime and communities dataset.
\end{abstract}

{\center \textbf{Keywords: }}
Large support; Mixed discrete and continuous; Nonparametric Bayes; Rate of posterior contraction; Strong posterior consistency

\section{Introduction}

In this paper we focus on nonparametric models for estimating unknown joint distributions for mixed scale data consisting of binary, ordered categorical, continuous and count measurements.  Somewhat surprisingly given the considerable applied interest, the literature on nonparametric estimation for mixed scale data is very small.  From a frequentist kernel smoothing perspective,  Li, Racine and co-authors   \citep{li:racine:2003, hall:etal:2004, ouya:etal:2006, li:racine:2008} proposed mixed kernel methodology and considered properties under somewhat restrictive conditions. These conditions are  relaxed  by  \citet{efro:2011} and a data-driven estimator designed to combat the curse of dimensionality is proposed.  His work assumed compact support for continuous variables and bounded support for discrete variables.  A recent collection of frequentist contributions to the topic can be found in \citet{adel:2013}. From a Bayesian semiparametric perspective, \citet{nore:pele:2012}  show posterior consistency for a finite mixture of latent multivariate normals, assuming bounded support for the discrete variables. 
Similar models have been applied for mixed scale data, but without theory support \citep{ever:1988,morl:2012,song:etal2009}. 

In the parametric literature on mixed scale modeling, it is common to model the joint distribution of underlying variables as Gaussian, with the categorical variables then obtained via thresholding.  A number of authors have considered variations on this theme in the nonparametric case, via modeling one or more components as non-Gaussian using mixtures and other approaches.  We apply a related strategy here to obtain a broad framework, with our focus then on studying the theory related to 
 large support, posterior consistency and rates of convergence.  This is the first contribution (to our knowledge) to Bayesian posterior consistency and posterior rates of contraction for a large class of mixed scale models. In particular a minimax rate for this class of 
problems is not known. However it is potentially faster than usual rates for estimating smooth continuous densities.  
Our goal is to provide theorems that allow leveraging on results obtained for multivariate continuous densities. We consider a multivariate mixed scale generalization of the rounding framework of \citet{cana:duns:2011}. This extension is intuitive both from a practical and theoretical point of view.

Section 2 introduces preliminaries, Section 3 proposes the class of priors under consideration, and Section 4 presents theorems on the KL support of the prior, strong posterior consistency and rates of posterior contraction.  Section 5 discusses an application to US communities and crime dataset, using a particular prior specification.

\section{Preliminaries and notation}
\label{sec:prelim}
Our focus is on modeling of joint probability distributions of mixed scale data \mbox{$y = (y_1^T, y_2^T)^T$}, where $y_1 =(y_{1,1},\dots, y_{1,p_1}) \in \mathcal{Y} \subseteq \R^{p_1}$ is a $p_1 \times 1$ vector of continuous observations and
$y_2  = (y_{2,p_1+1},\dots, y_{2,p}) \in Q$ with $Q = \bigotimes_{j=1}^{p_2}\{0, 1, \dots, q_{j} - 1 \}$ is a $p_2 \times 1$ vector of discrete variables having $q = (q_1,\ldots,q_{p_2})^T$ as the respective number of levels and $p_2 = p - p_1$. Clearly $y_2$ can include binary variables ($q_j = 2$), categorical variables ($q_j >2$) or counts ($q_j = \infty$). Hence, $y$ is a $p \times 1$ vector of variables having mixed measurement scales.  
We let $y \sim f$, with $f$ denoting the joint density with respect to an appropriate dominating measure $\mu$ to be defined below.  The set of all possible such joint densities is denoted $\mathcal{F}$.  Following a Bayesian nonparametric approach, we propose to specify a prior $f \sim \Pi$ for the joint density having \mbox{large support over $\mathcal{F}$.}

For the continuous variables, we let $(\Omega_1,\mathcal{S}_1,\mu_1)$ denote the $\sigma$-finite measure space having $\Omega_1 = \mathcal{Y}$, $\mathcal{S}_1$ the Borel $\sigma$-algebra of subsets of $\Omega_1$, and $\mu_1$ the Lebesgue measure. Similarly for the discrete variables we let $(\Omega_2,\mathcal{S}_2,\mu_2)$ denote the $\sigma$-finite measure space having $\Omega_2 \subseteq \N^{p_2}$, a subset of the $p_2$-dimensional set of natural numbers, $\mathcal{S}_2$ containing all non-empty subsets of $\Omega_2$, and $\mu_2$ the counting measure.  
Then, we let $\mu = \mu_2 \times \mu_2$ be the product measure on the product space $(\Omega,\mathcal{S}) = (\Omega_1,\mathcal{S}_1) \times (\Omega_2,\mathcal{S}_2)$.  To formally define the joint density $f$, first let $\nu$ denote a $\sigma$-finite measure on $(\Omega, \mathcal{S})$ that is absolutely continuous with respect to $\mu$.  Then, by the Radon-Nikodym theorem, there exists a function $f$ such that $\nu(A) = \int_A f d\mu$.

In studying properties of a prior $\Pi$ for the unknown density $f$, such as large support and posterior consistency, it is necessary to define notions of distance and neighborhoods within the space of densities $\mathcal{F}$.  Letting $f_0 \in \mathcal{F}$ denote an arbitrary density, such as the true density that generated the data, the Kullback-Leibler divergence of $f$ from $f_0$ is
\begin{align*}
	d_{KL}(f_0,f) & = \int_\Omega f_0 \log(f_0/f) d \mu = \int_{\Omega_1} \int_{\Omega_2} f_0 \log(f_0/f) d \mu_1\, d \mu_2\, \notag \\
			& = \int_{\mathcal{Y}} \sum_{y_2 \in Q} f_0(y_1,y_2 ) \log\left(\frac{f_0(y_1,y_2 )}{f(y_1,y_2 )}\right) d y_1 
\end{align*}
with the integrals taken in any order from Fubini's theorem. Another topology is induced by the $L_1$-metric. 
If $f$ and $f_0$ are probability distributions with respect to the product measure $\mu$, their $L_1$-distance is
\begin{align*}
	||f_0 - f|| & = \int_\Omega |f_0 - f|  d \mu =  \int_{\Omega_1} \int_{\Omega_2}  |f_0 - f| d \mu_1\, d \mu_2\    \notag \\
		& =  \int_{\mathcal{Y}} \sum_{y_2 \in Q}  |f_0(	y_1,y_2 ) - f(y_1,y_2)| d y_1.
\end{align*}

\section{Rounding prior}
\label{sec:roundprior}

In order to induce a prior $f \sim \Pi$ for the density of the mixed scale variables, we let
\begin{eqnarray}
y = h(y^*),\quad y^* \sim f^*,\quad f^* \sim \Pi^*, \label{eq:round}
\end{eqnarray}
where $h: \R^p \to \Omega$,  $y^* = (y_1^*,\ldots, y_p^*)^T \in \R^p$, $f^* \in \mathcal{F}^*$, $\mathcal{F}^*$ is the set of densities with respect to Lebesgue measure over $\R^p$, and
$\Pi^*$ is a prior over $\mathcal{F}^*$. To introduce an appropriate mapping $h$, we let 
\begin{eqnarray}
h(y^*) = \big\{ h_1(y_1^*)^T, h_2(y_2^*)^T \big\}^T, \label{eq:h}
\end{eqnarray}
where $h_1(y_1^*)=\{ h_{1,1}(y_{1,1}^*), \dots, h_{1,p_1}(y_{1,p_1}^*)\}$, $h_{1,j}: \R \to \mathcal{Y}_j \subseteq \R$ is a monotone one-to-one differentiable mapping, with $\mathcal{Y}_j$ the support of $y_{1,j}$, and $h_2$ are thresholding functions that replace the real-valued inputs with non-negative integer outputs by thresholding the different inputs separately. 
Let $A^{(j)} = \{ A_1^{(j)},\ldots, A_{q_j}^{(j)} \}$ denote a prespecified partition of $\R$ into $q_j$ mutually exclusive subsets, for $j=1,\ldots, p_2$, with the subsets ordered so that $A_h^{(j)}$ is placed before $A_l^{(j)}$ for all $h < l$.  Then, letting $A_{y_2} = \{ y_2^*: y_{2,j}^* \in A_{y_{2j}}^{(j)}, j=1,\ldots, p_2 \}$, the mixed scale density $f$ is defined as
\begin{eqnarray}
f(y) = g(f^*) 
		  &=& \int_{A_{y_2}} f^*(h_1^{-1}(y_1), y^*_2) \, |J_{h_1^{-1}(y_1)}| \, dy_2^*\label{eq:g}
\end{eqnarray}
where $J_{h_1^{-1}(y_1)}$ is the Jacobian matrix of the inverse function $h_1^{-1}$. A typical choice for $h_{1,j}$ when $\mathcal{Y}_j = \R$ is the identity link which has the benefit to greatly simplify the formulation.
%
The function $g: \mathcal{F^*} \to \mathcal{F}$ defined in (\ref{eq:g}) is a bijective mapping from the space of densities with respect to Lebesgue measure on $\R^p$ to the space of mixed-scale densities $\mathcal{F}$. It is clear that there are infinitely many $f^*$ that map into a single $g(f^*) = f_0$.
This framework generalizes \citet{cana:duns:2011}, which focused only on count variables.  The theory is substantially more challenging in the mixed scale case when there are continuous variables involved.  

\section{Theoretical properties}
\label{theoretical}

Clearly the properties of the induced prior $f \sim \Pi$ will be driven largely by the properties of $f^* \sim \Pi^*$. 
Lemma 1 shows that the mapping $g: \mathcal{F}^* \to \mathcal{F}$ maintains Kullback-Leibler (KL) neighborhoods.
The proof is omitted as being a straightforward modification of that for Lemma 1 in  \citet{cana:duns:2011}.

\begin{lemma}
Choose any $f_0^*$ such that $f_0 = g(f^*_0)$ for any fixed $f_0 \in \mathcal F$. Let $\mathcal{K}_\epsilon (f^*_0)= \{ f^* : d_{KL}(f^*_0,f^*) < \epsilon\}$ be a Kullback-Leibler neighborhood of size $\epsilon$ around $f^*_0$.  Then the image $g(\mathcal{K}_{\epsilon}(f_0^*))$ contains values $f \in \mathcal{F}$ in a Kullback-Leibler neighborhood of $f_0$ of at most size $\epsilon$.
\label{lem:klmapping}
\end{lemma}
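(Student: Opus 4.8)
The plan is to prove the slightly stronger pointwise statement that $g$ never increases Kullback--Leibler divergence, namely $d_{KL}(g(f_0^*),g(f^*)) \le d_{KL}(f_0^*,f^*)$ for every $f^* \in \mathcal{F}^*$; the claimed inclusion $g(\mathcal{K}_\epsilon(f_0^*)) \subseteq \mathcal{K}_\epsilon(f_0)$ then follows at once, since $f^* \in \mathcal{K}_\epsilon(f_0^*)$ forces $d_{KL}(f_0, g(f^*)) < \epsilon$. Conceptually this is a data-processing inequality: $g$ corresponds to applying the deterministic map $h$ to the latent vector $y^*$, and KL divergence cannot grow under such a coarsening. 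To make this concrete I would factor $g$ into two steps. First, the continuous block: since each $h_{1,j}$ is a monotone one-to-one differentiable map, $y_1^* \mapsto h_1(y_1^*)$ is a diffeomorphism of $\R^{p_1}$ while the latent coordinates $y_2^*$ are left untouched. Writing $\tilde f^*(y_1,y_2^*) = f^*(h_1^{-1}(y_1),y_2^*)\,|J_{h_1^{-1}(y_1)}|$ for the transformed density on $\mathcal{Y}\times\R^{p_2}$, the invariance of KL divergence under a bijective change of variables gives $d_{KL}(\tilde f_0^*,\tilde f^*) = d_{KL}(f_0^*,f^*)$, the Jacobian factors cancelling inside the logarithm.

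Second, the discrete block is a marginalization over the bins: by (\ref{eq:g}) one has $f(y_1,y_2) = \int_{A_{y_2}} \tilde f^*(y_1,y_2^*)\,dy_2^*$, and the cells $\{A_{y_2}\}_{y_2 \in Q}$ partition $\R^{p_2}$. The key tool is the continuous log-sum inequality, i.e.\ convexity of $(a,b)\mapsto a\log(a/b)$, which yields for each fixed $y_1$ and each cell
\[
\Bigl(\int_{A_{y_2}} \tilde f_0^*\Bigr)\,\log\frac{\int_{A_{y_2}} \tilde f_0^*}{\int_{A_{y_2}} \tilde f^*} \;\le\; \int_{A_{y_2}} \tilde f_0^*\,\log\frac{\tilde f_0^*}{\tilde f^*}\,dy_2^* .
\]
Summing the left side over $y_2 \in Q$ rebuilds $\sum_{y_2} f_0(y_1,y_2)\log\{f_0(y_1,y_2)/f(y_1,y_2)\}$, while summing the right side over the partition and integrating over $y_1 \in \mathcal{Y}$ recovers $\int_{\mathcal{Y}}\int_{\R^{p_2}} \tilde f_0^*\log(\tilde f_0^*/\tilde f^*)$. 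Chaining with the first step gives $d_{KL}(f_0,f) \le d_{KL}(\tilde f_0^*,\tilde f^*) = d_{KL}(f_0^*,f^*)$, which is the desired contraction.

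The step requiring the most care is the discrete marginalization, specifically the validity of the log-sum inequality and the summation--integration interchange when $q_j = \infty$, so that $Q$ and the partition $\{A_{y_2}\}$ are countably infinite. I would handle this by applying the standard additive correction: since $a\log(a/b) - a + b \ge 0$ for all $a,b\ge 0$, the corrected integrand is nonnegative, so Tonelli's theorem licenses the interchange of the countable sum over $y_2$ with the integral over $y_1$, and the inequality passes to the countable limit by monotone convergence. Apart from this measure-theoretic bookkeeping, and the routine check that $\tilde f_0^*,\tilde f^*$ remain genuine densities (immediate from $h_1$ being a diffeomorphism), the computation is exactly the count-only argument of \citet{cana:duns:2011} with the continuous change-of-variables step prepended, which is why the full proof is omitted.
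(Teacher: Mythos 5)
Your proof is correct and is essentially the argument the paper has in mind: the paper omits the proof as a ``straightforward modification'' of Lemma 1 in \citet{cana:duns:2011}, whose count-only proof is exactly your log-sum (Jensen) inequality over the cells $A_{y_2}$, and your prepended change-of-variables step for the continuous block (with the Jacobian cancelling inside the logarithm) is precisely the required modification. Your handling of the countable case via the nonnegative correction $a\log(a/b)-a+b\ge 0$ and Tonelli's theorem is sound and makes the omitted measure-theoretic details explicit.
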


Large support of the prior plays a crucial role in posterior consistency.
Under the theory of Schwartz \citep{schw:1965}, given $f_0$ in the KL support of the prior, strong posterior consistency can be obtained by showing the existence of an exponentially consistent sequence of tests for the hypothesis $H_0:f = f_0$ versus $H_1: f \in U^C(f_0)$ where $U(f_0)$ is a neighborhood of $f_0$ and $U^C(f_0)$ is the complement of $U(f_0)$. \citep{ghos:etal:1999} show that the existence of such a sequence of tests is guaranteed by balancing the size of a sieve and the prior probability assigned to its complement. 

We now provide sufficient conditions for $L_1$ posterior consistency for priors in the class proposed in expression (1).  Our Theorem \ref{theo:consistency} builds on Theorem 8 of \citet{ghos:etal:1999}. 
The main differences are that we define the sieve $\mathcal{F}_n$ as $g(\mathcal{F}^*_n)$, where  $\mathcal{F}^*_n$ is a sieve on $\mathcal{F}^*$ and that we require conditions on the prior probability in terms of the underlying $\Pi^*$. The proof relies on the same steps of  \citet{ghos:etal:1999} given lemmas \ref{lem:l1mapping} and \ref{lem:metricentropy} (reported in the Appendix) which give an upper bound for the $L_1$ metric entropy $J(\delta,\mathcal{F}_n)$ defined as the logarithm of the minimum number of $\delta$-sized $L_1$ balls needed to cover $\mathcal{F}_n$. 
%
\begin{theorem}
Let $\Pi$ be a prior on $\mathcal F$ induced by $\Pi^*$ as described in expression (\ref{eq:round}). Suppose $f_0$ is in the KL support of $\Pi$ and let $U =\{f \in \mathcal{F}: ||f-f_0||< \epsilon \}$. If for each $\epsilon>0$, there is a $\delta < \epsilon$, $c_1$, $c_2 >0$, $\beta < \epsilon^2/8$ and there exist sets $\mathcal{F}_n^* \subset \mathcal{F}^*$ 
such that for $n$ large
\begin{itemize}
	\item[$(i)$] $\Pi^*(\mathcal{F}_n^{*C}) \leq c_1 e^{-n c_2}$;
	\item[$(ii)$] $J(\delta,  \mathcal{F}_n^*) < n\beta$
\end{itemize}
then $\Pi(U \mid {\bf y}_1, \dots, {\bf y}_n) \to 1$ a.s. $P_{f_0}$.
\label{theo:consistency}
\end{theorem}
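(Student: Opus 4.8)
The plan is to reduce the claim to Theorem 8 of \citet{ghos:etal:1999} by transporting every hypothesis from the latent space $\mathcal{F}^*$ to the observation space $\mathcal{F}$ through the rounding map $g$. The natural sieve to work with is $\mathcal{F}_n = g(\mathcal{F}_n^*)$, and the first task is to check that this $\mathcal{F}_n$ inherits the two quantitative requirements of the \citet{ghos:etal:1999} framework: exponentially small prior mass on its complement, and $L_1$ metric entropy growing more slowly than $n\epsilon^2/8$. Since $f_0$ is assumed to lie in the KL support of $\Pi$, the Schwartz denominator bound is available for free, and the whole argument becomes a matter of supplying the numerator control through tests and sieves.

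First I would dispose of the prior mass condition. Because $\Pi$ is by construction the pushforward of $\Pi^*$ under $g$, we have $\Pi(\mathcal{F}_n^{C}) = \Pi^*(g^{-1}(\mathcal{F}_n^{C}))$. As $\mathcal{F}_n = g(\mathcal{F}_n^*)$ forces $g^{-1}(\mathcal{F}_n) \supseteq \mathcal{F}_n^*$, taking complements gives $g^{-1}(\mathcal{F}_n^{C}) \subseteq \mathcal{F}_n^{*C}$, and hence $\Pi(\mathcal{F}_n^{C}) \leq \Pi^*(\mathcal{F}_n^{*C}) \leq c_1 e^{-n c_2}$ by hypothesis $(i)$. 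This is precisely the summable prior mass condition that lets one neglect the part of $U^{C}$ lying outside the sieve in the posterior expansion.

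Next I would control the entropy of the sieve, which is where Lemmas \ref{lem:l1mapping} and \ref{lem:metricentropy} carry the load. Lemma \ref{lem:l1mapping} establishes that $g$ does not expand $L_1$ distance, so that any $\delta$-cover $\{f_i^*\}$ of $\mathcal{F}_n^*$ in the Lebesgue $L_1$ metric pushes forward to a $\delta$-cover $\{g(f_i^*)\}$ of $\mathcal{F}_n$ in the mixed-scale $L_1$ metric, using the same number of balls. Lemma \ref{lem:metricentropy} packages this into the bound $J(\delta,\mathcal{F}_n) \leq J(\delta,\mathcal{F}_n^*) < n\beta$ with $\beta < \epsilon^2/8$ from hypothesis $(ii)$. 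Given this cover, the standard device of \citet{ghos:etal:1999} supplies, for each covering ball that is $L_1$-separated from $f_0$, an exponentially powerful test of $H_0: f=f_0$ against that ball; a union bound over the at most $e^{J(\delta,\mathcal{F}_n)} = e^{n\beta}$ balls keeps the combined type-one and type-two error exponentially small exactly because $\beta < \epsilon^2/8$.

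Assembling the pieces, the exponentially consistent tests on $U^{C}\cap\mathcal{F}_n$, the exponentially small prior mass $\Pi(\mathcal{F}_n^{C})$, and the KL support lower bound $\Pi(\{f: d_{KL}(f_0,f)<\gamma\}) > 0$ together fulfil every premise of Theorem 8 of \citet{ghos:etal:1999}, which then delivers $\Pi(U \mid {\bf y}_1,\dots,{\bf y}_n) \to 1$ almost surely $P_{f_0}$. I expect the entropy transfer to be the only genuinely delicate step: the argument works only because $g$ is a non-expansion in $L_1$, which is the content of Lemma \ref{lem:l1mapping}; once that contraction is granted, the rounding machinery disappears and what remains is a faithful transcription of the continuous-density consistency proof.
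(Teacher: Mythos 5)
Your proposal is correct and follows essentially the same route as the paper: define the sieve $\mathcal{F}_n = g(\mathcal{F}_n^*)$, transfer the entropy bound via Lemmas \ref{lem:l1mapping} and \ref{lem:metricentropy}, transfer the prior mass condition from $\Pi^*$ to $\Pi$, and then run the test-construction argument of Theorem 8 of \citet{ghos:etal:1999} (which the paper invokes through Barron's theorem). Your explicit verification that $g^{-1}(\mathcal{F}_n^{C}) \subseteq \mathcal{F}_n^{*C}$, hence $\Pi(\mathcal{F}_n^{C}) \leq \Pi^*(\mathcal{F}_n^{*C})$, is a step the paper leaves implicit, and is in fact stated slightly more carefully than the equality claimed in the paper's proof of Theorem \ref{theo:rate}.
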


We now state a theorem on the rate of convergence (contraction) of the posterior distribution.  The theorem gives conditions on the prior $\Pi^*$ similar to those directly required by Theorem 2.1 of  \citet{ghos:etal:2000}. The proof is reported in the Appendix.

\begin{theorem}
Let $\Pi$ be the prior on $\mathcal F$ induced by $\Pi^*$ as described in expression (\ref{eq:round}) and $U = \{f : d(f,f_0) \leq M \epsilon_n\}$ with $d$ the $L_1$ or Hellinger distance.
Suppose that for a sequence $\epsilon_n$, with $\epsilon_n \to 0$ and $n\epsilon_n^2 \to \infty$, a constant $C>0$, sets $\mathcal{F}_n^* \subset \mathcal{F}^*$ 
and $B^*_n = \{f^* : \int f_0^{*} \log(f_0^{*}/f^*) d\mu \leq \epsilon^2_n, \int f_0^{*} (\log(f_0^{*}/f^*))^2 d\mu \leq \epsilon^2_n\}$ defined for a given $f_0^{*} \in g^{-1}(f_0)$, we have
\begin{itemize}
	\item[$(iii)$] $J(\epsilon_n,  \mathcal{F}_n^*) < C n \epsilon^2_n$;
	\item[$(iv)$] $\Pi^*(\mathcal{F}_n^{*C}) \leq \exp\{- n \epsilon^2_n (C + 4) \}$;
	\item[$(v)$] $\Pi^*(B^*_n) \geq  \exp\{-C n \epsilon^2_n \}$
\end{itemize}
then for sufficiently large $M$, we have that $\Pi(U^C \mid {\bf y}_1, \dots, {\bf y}_n) \to 0$ in $P_{f_0}$-probability.
\label{theo:rate}
\end{theorem}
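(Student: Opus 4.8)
The plan is to deduce the statement from Theorem 2.1 of \citet{ghos:etal:2000} applied to the \emph{observed} model, after translating each of the hypotheses $(iii)$--$(v)$, which are phrased on $\mathcal{F}^*$, into the corresponding requirement on $\mathcal{F}$. Concretely, for a rate $\epsilon_n\to 0$ with $n\epsilon_n^2\to\infty$ that result gives $\Pi(U^C\mid \my_1,\dots,\my_n)\to 0$ in $P_{f_0}$-probability as soon as one exhibits sieves $\mathcal{F}_n\subset\mathcal{F}$ with (a) an entropy bound $J(\epsilon_n,\mathcal{F}_n)\le C'n\epsilon_n^2$, (b) a sieve-complement bound $\Pi(\mathcal{F}_n^C)\le \exp\{-n\epsilon_n^2(C'+4)\}$, and (c) a prior-mass lower bound $\Pi(B_n)\ge\exp\{-C''n\epsilon_n^2\}$ on $B_n=\{f: d_{KL}(f_0,f)\le\epsilon_n^2,\ \int f_0(\log(f_0/f))^2\,d\mu\le\epsilon_n^2\}$. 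The natural choice is $\mathcal{F}_n=g(\mathcal{F}_n^*)$, and the three structural facts already available---Lemma \ref{lem:klmapping}, the $L_1$-contraction of Lemma \ref{lem:l1mapping}, and the entropy bound of Lemma \ref{lem:metricentropy}---are exactly what is needed to push $(iii)$--$(v)$ through to (a)--(c).

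The first two conditions are routine. For (a), Lemma \ref{lem:metricentropy} bounds the $L_1$ metric entropy of $g(\mathcal{F}_n^*)$ by that of $\mathcal{F}_n^*$, so $(iii)$ gives $J(\epsilon_n,\mathcal{F}_n)\le J(\epsilon_n,\mathcal{F}_n^*)<Cn\epsilon_n^2$; the form required for $d$ equal to $L_1$ is then immediate, and for Hellinger it follows from the standard inequalities relating the two metrics. For (b) I would use that $\Pi$ is the push-forward of $\Pi^*$ under $g$: since $f^*\in\mathcal{F}_n^*$ implies $g(f^*)\in g(\mathcal{F}_n^*)=\mathcal{F}_n$, one has $g^{-1}(\mathcal{F}_n^C)\subseteq\mathcal{F}_n^{*C}$, whence $\Pi(\mathcal{F}_n^C)\le\Pi^*(\mathcal{F}_n^{*C})\le\exp\{-n\epsilon_n^2(C+4)\}$ by $(iv)$.

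The real content is condition (c). Here I would prove the inclusion $B_n^*\subseteq g^{-1}(B_n)$, which by the push-forward identity yields $\Pi(B_n)\ge\Pi^*(B_n^*)\ge\exp\{-Cn\epsilon_n^2\}$ from $(v)$. The first defining inequality of $B_n^*$ transfers at once: Lemma \ref{lem:klmapping} is precisely the statement $d_{KL}(f_0,g(f^*))\le d_{KL}(f_0^*,f^*)$, so the KL constraint is preserved. What remains is to show that $\int f_0^*(\log(f_0^*/f^*))^2\,d\mu\le\epsilon_n^2$ forces $\int f_0(\log(f_0/g(f^*)))^2\,d\mu\le\epsilon_n^2$ (up to a constant).

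This last implication is the main obstacle, and it does \emph{not} follow from the convexity argument behind Lemma \ref{lem:klmapping}: the integrand $p\log(p/q)$ of the KL divergence is jointly convex, but $p(\log(p/q))^2$ is not, so coarsening can in principle increase the second-moment functional. My plan is to exploit the product structure of $g$. The continuous coordinates enter only through the bijective differentiable map $h_1$, under which both functionals are invariant by the change-of-variables formula, so attention reduces to the thresholded coordinates. For those I would decompose the log-ratio as $L^*=L_{\mathrm{marg}}+L_{\mathrm{cond}}$, a between-cell term depending only on $y_2$ and a within-cell term, giving $\int f_0(\log(f_0/g(f^*)))^2\,d\mu=E_{f_0^*}[L_{\mathrm{marg}}^2]$ while $\int f_0^*(\log(f_0^*/f^*))^2\,d\mu=E_{f_0^*}[(L_{\mathrm{marg}}+L_{\mathrm{cond}})^2]$, so that everything reduces to controlling the cross term $2E_{f_0^*}[L_{\mathrm{marg}}L_{\mathrm{cond}}]$. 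The point is that $f^*$ and $f_0^*$ are both normalized, which within each cell reads $E_{f_0^*}[e^{-L_{\mathrm{cond}}}\mid y_2]=1$ and forces the within-cell discrepancies to average out; it is this constraint, rather than naive convexity, that prevents the cross term from inflating the observed second moment. Making this quantitative so that the observed functional is bounded by a fixed multiple of $\epsilon_n^2$---the excess being absorbed into $C$ and into the threshold $M$ of $U$---is the crux. With (a)--(c) established, the conclusion follows directly from Theorem 2.1 of \citet{ghos:etal:2000}.
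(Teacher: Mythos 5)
Your reduction---take $\mathcal{F}_n=g(\mathcal{F}_n^*)$, transfer the entropy bound via Lemma \ref{lem:metricentropy}, transfer the sieve-complement bound via the push-forward identity, and feed everything into the contraction machinery of \citet{ghos:etal:2000}---is exactly the skeleton of the paper's proof (the paper re-derives the argument from their Theorem 7.1 and Lemma 8.1 rather than quoting Theorem 2.1 as a black box, but that difference is cosmetic). The problem is that your argument stops at the one step that carries the real content, and you say so yourself: the transfer of the second-moment condition, i.e.\ that $f^*\in B_n^*$ forces $\int f_0\{\log(f_0/g(f^*))\}^2 d\mu\lesssim \epsilon_n^2$, is never proved. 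Moreover, the route you sketch does not close as stated: writing $L^*=L_{\mathrm{marg}}+L_{\mathrm{cond}}$, the cross term is $E_{f_0^*}[L_{\mathrm{marg}}L_{\mathrm{cond}}]=E_{f_0}[L_{\mathrm{marg}}K]$, where $K(y)=E_{f_0^*}[L_{\mathrm{cond}}\mid y]\ge 0$ is the within-cell Kullback--Leibler divergence; the chain rule controls only $E_{f_0}[K]\le \epsilon_n^2$, while $L_{\mathrm{marg}}$ is unbounded and no moment of $K^2$ is available from $B_n^*$, so neither Cauchy--Schwarz nor the normalization identity you invoke yields a bound of order $\epsilon_n^2$. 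So, as written, the proposal is not a proof. Your instinct that something genuinely needs proving here is nevertheless correct: the paper itself dismisses this very step with the word ``Clearly'' ($g(B_n^*)\subseteq B_n$), and that inclusion is not clear---cell by cell it can even fail, because $\psi(x)=x(\log x)^2$ is concave on $(0,e^{-1})$, so coarsening can strictly increase the second-moment functional.

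The gap can be filled, but by a global argument that uses the KL half of $B_n^*$, not convexity alone. With $r=f_0^*/f^*$ and $\rho=f_0/f$, one has $\rho(y)=E_{f^*}[r\mid h(y^*)=y]$ (the continuous coordinates are harmless since $h_1$ is a diffeomorphism) and $V(f_0\|f)=\int f\,\psi(\rho)\,d\mu$. Decompose $\psi\le\psi_{+}+2j$, where $\psi_{+}(x)=\psi(x)\I(x\ge 1)$ and $j(x)=x\log x-x+1\ge 0$: on $[1,\infty)$ this is trivial, and on $(0,1]$ it holds because $\phi=2j-\psi$ satisfies $\phi(1)=0$ and $\phi'(x)=-(\log x)^2\le 0$. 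The function $\psi_+$, unlike $\psi$, is convex on $(0,\infty)$, so conditional Jensen gives $\int f\,\psi_+(\rho)\,d\mu\le\int f^*\psi_+(r)\,dy^*\le\int f^*\psi(r)\,dy^*=V(f_0^*\|f^*)$, while $\int f\,j(\rho)\,d\mu=d_{KL}(f_0,f)\le d_{KL}(f_0^*,f^*)$ by Lemma \ref{lem:klmapping}. Hence $V(f_0\|f)\le V(f_0^*\|f^*)+2\,d_{KL}(f_0^*,f^*)\le 3\epsilon_n^2$, so $g(B_n^*)$ lies in the neighborhood $B_n$ inflated by a factor $3$---precisely the ``up to a constant'' slack you anticipated, absorbed into the constants of \citet{ghos:etal:2000} or into $M$. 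With this lemma in place your conditions (a)--(c) hold, the black-box application of Theorem 2.1 goes through, and your argument and the paper's coincide in substance.
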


\begin{remark}
Since $g$ is bijective there are infinitely many $f_0^* \in g^{-1}(f_0)$ but it is sufficient that condition $(v)$ is satisfied for just one of them.
\end{remark}

The convergence rate that can obtained using Theorem~\ref{theo:rate} may change with respect to the particular choice for $h_1$. Assume the latent variables $y_1^*$ are drawn from an $\alpha$-H\"older smooth density.  Since the smoothness of the density of the observed continuous variables $y_1$ depends on the mapping $h_1$, the choice for $h_1$ can decrease or increase the smoothness, impacting the optimal rate. Such complications clearly do not arise if $h_1$ is the identity function. 

The rate obtained using Theorem~\ref{theo:rate} in general does not correspond to the minimax optimal rate in this class of problems, but represents an upper bound on the rate.  If the $p_2$ categorical variables all have finite support, the minimax rate is shown in Lemma~\ref{lem:minimax} below.

\begin{lemma}
Let $q_j < \infty$ for all $j = 1, \dots, p_2$ and assume that the $p_1$ continuous variables have marginal $\alpha$-H\"older smooth density. 
Then, the minimax optimal rate for the mixed-scale density is $n^{-\alpha/(2\alpha + p_1)}$.
\label{lem:minimax}
\end{lemma}

\begin{example}
Conditions $(iii)$--$(v)$ are satisfied, for example, by a Dirichlet process mixture of multivariate Gaussians prior as discussed in \citet{shen:etal} for any $f_0^*$ belonging to the smoothness class of locally $\alpha$-H\"older functions. This convergence rate result for multivariate continuous density estimation directly implies the convergence rate for the mixed scale density with conditions on the first $p_1$ components. In particular if $h_1$ is the identity function, the requirements for $f_0^*$ to be in the KL support of $\Pi^*$ induce the same requirements for the first $p_1$ components of $f_0$ with no condition on the remaining $p_2$ discrete components.
\end{example}

\section{Application to crime data}

We use our proposed methodology to estimate the joint density of per capita income, in thousands of \$ ($y_1$) and number of murders in 1990 ($y_2$) in the US. The dataset is part of a bigger dataset on communities and crime from the UCI Machine Learning Repository. The data set is from the 1990 US Census, 1995 US FBI Uniform Crime Report and 1990 US Law Enforcement Management and Administrative Statistics Survey. Our aim is to estimate the joint mixed-scale density of the per capita income (continuous) and number of murders (counts) in each state with more than 20 observations to illustrate our method and study the relationship between these two variables.
For each state the pair $(y_{1,i}, y_{2,i})^T$ is available where $i=1, \dots n_j$ and $n_j$ is the number of communities present in the dataset for state $j$.  
This analysis is clearly illustrative since the FBI noted that even the use of the complete dataset is over-simplistic if one wants to evaluate communities, since many relevant factors are not included. 

To model these data we define our mixed-scale prior through a latent Dirichlet process (DP) location-scale mixture of Gaussians prior \citet{esco:west:1995,mull:etal:1996}. Let $\Pi^*$ be the prior induced by the model
\begin{equation}
f^*(y^*) = \int N(y^*; \theta, \Sigma) d G(\theta, \Sigma), \, \, G \sim DP(\alpha P_0),
\label{eq:model}
\end{equation}
where $P_0 = \text{N}_p(\theta;\theta_0,\kappa_0 \Sigma)\text{Inv-W}(\Sigma;\nu_0,{\bf S}_0)$ is a normal-inverse-Wishart base measure and $\alpha >0$ is the DP scale parameter. This multivariate location-scale mixture is a default choice for multivariate density estimation in many contexts \citep{mull:etal:1996,MacE:Mull:1998} and has been recently shown to lead to posterior consistency \citep{cana:debl:2013}. The latent prior specification is completed eliciting the prior hyperparameters, which we fix equal to $\alpha=1$, $\nu_1 = \nu_2=3$, $\kappa_0=1$, ${\bf S}_0=\mbox{diag}(6,60)$, and $\theta_0= \bar{y}$, where, following an empirical Bayes approach, $\bar{y}$ is the observed sample mean. Our prior specification is completed introducing the mapping function $h$. For the first continuous component we let $h_1$ be the identity function. For $h_2$ we define a thresholding function as in \citet{cana:duns:2011} which is defined in terms of thresholds partitioning the latent space. The partition of $\R$ can be chosen so to center the prior expectation on some particular probability mass function, but we let (suppressing the index $^{(j)}$ for simplicity) $A_k = [a_k, a_{k+1})$ with $a_0 = -\infty$ and $a_k = k$ for $k > 0$.

\subsection{Posterior computation}

We compute posterior quantities by means of Markov chain Monte Carlo (MCMC) sampling from the posterior distribution. Conditionally on the latent $y^*_i$ there is a rich variety of algorithms for posterior computation \citep{MacE:Mull:1998, neal:2000} for model \eqref{eq:model}. To take advantage of these approaches, we implement a Gibbs sampling algorithm which makes use of a data augmentation step which generates the latent $y_i^*$. Conditionally on such latent variables, we use Algorithm 8 in \citet{neal:2000} and, at each step of the sampler, compute the posterior quantities of interest. This approach follows the idea proposed in \citet{cana:duns:2011} and it is suitable for any discrete variables induced via thresholding functions $h_2$. In particular, for our crime data and a particular state, it consists in the following steps:
\begin{itemize}
		\item Generate 
		$u_{i} \sim U \Big(\Phi( a_{y_{2,i}} ; \tilde{\theta}_{i}, \tilde{\sigma}^2_{i} ), 
     			         \Phi( a_{y_{2,i}+1} ;   \tilde{\theta}_{i}, \tilde{\sigma}^2_{i}  )\Big)$ for $i = 1, \dots, n_j$, where
	\begin{align*}
		& \tilde{\theta}_{i}       = \theta_{S_i,2} + \Sigma_{S_i,21} \Sigma^{-1}_{S_i,11} (y_{1,i} - \theta_{S_i,1}) \\
		& \tilde{\sigma}^2_{i}  = \Sigma_{S_i,22} - \Sigma_{S_i,21} \Sigma^{-1}_{S_i,11} \Sigma_{S_i,12}
	\end{align*}
		are the usual conditional expectation and conditional variance of the multivariate normal.
		\item Let $y^*_{2,i} = \Phi^{-1}(u_{i};  \tilde{\theta}_{i}, \tilde{\sigma}^2_{i} )$ and  $y^*_{1,i} =  y_{1,i}$.
\end{itemize}
For each state, we run our sampler for 4,000 iterations and discard the first 1,000 as burn in. The traceplots of the marginal and joint distributions, computed for some points of the domain, suggest convergence and adequate mixing.

\subsection{Results}

In Table~\ref{tab:posteriors} we report some posterior summaries, namely the posterior mean of the quartiles of the marginal distributions of $y_1$ and of the conditional distributions of $y_1 | y_2 = 0$ and the marginal mean posterior $\text{pr}(y_2=0)$ and $\text{pr}(y_2 > 15)$. Most of the communities report zero murders. Such zero-inflation is automatically accommodated by our method through kernels located at negative values. This zero-inflation is a typical feature of many count data.
 
\begin{table}[ht]
\footnotesize \centering 
\caption{Posterior summaries for the marginal distributions and the conditionals distribution of $y_1 | y_2 = 0$ for the crime dataset}
\begin{tabular}{lrrrrrrrrrrr}
  \hline
State &\multicolumn{4}{c}{Marginal $f_1$} & \multicolumn{3}{c}{Marginal $f_2$} & \multicolumn{4}{c}{Conditional $f_{1|y_2=0}$} \\
 & $q_{0.25}$ & $q_{0.5}$ &$q_{0.75}$ & $E(y_1)$ &pr$(y_2=0)$ &pr$(y_2>15)$ &$E(y_2)$ &$q_{0.25}$ & $q_{0.5}$ &$q_{0.75}$ &$E(y_1|y_2=0)$ \\
AL &10.44 &11.86 &13.35 &13.07 &0.24 &0.09 &5.64 &10.01 &11.64 &13.46 &12.78 \\
AR &9.99 &11.16 &12.37 &11.25 &0.23 &0.11 &6.36 &9.72 &11.13 &12.56 &11.27 \\
AZ &10.84 &13.26 &14.85 &13.54 &0.18 &0.10 &5.77 &11.89 &13.90 &16.29 &14.33 \\
CA &12.21 &15.67 &20.21 &17.13 &0.12 &0.11 &6.82 &11.13 &14.86 &19.91 &16.12 \\
CO &11.99 &13.92 &15.62 &13.82 &0.32 &0.12 &6.44 &11.34 &13.37 &15.50 &13.58 \\
CT &17.20 &19.71 &23.40 &20.96 &0.46 &0.06 &3.33 &16.86 &19.25 &23.31 &20.35 \\
FL &12.57 &14.72 &17.54 &15.78 &0.16 &0.05 &4.30 &11.80 &14.05 &16.72 &14.64 \\
GA &10.42 &11.69 &14.85 &12.88 &0.19 &0.14 &6.26 &10.85 &12.43 &15.40 &13.26 \\
IA &11.85 &12.64 &13.87 &13.51 &0.42 &0.01 &2.32 &11.89 &12.75 &14.05 &13.52 \\
IL &15.01 &19.43 &23.49 &19.72 &0.35 &0.11 &3.78 &16.05 &19.88 &23.96 &20.14 \\
IN &11.19 &12.83 &15.03 &13.39 &0.40 &0.08 &5.33 &10.95 &12.47 &14.70 &12.86 \\
KY &10.59 &11.51 &12.59 &11.81 &0.39 &0.04 &3.27 &10.52 &11.54 &12.69 &11.80 \\
LA &8.62 &10.19 &11.71 &10.21 &0.19 &0.26 &12.38 &8.71 &10.32 &12.21 &10.36 \\
MA &15.23 &17.44 &20.85 &18.69 &0.57 &0.03 &1.70 &14.95 &17.12 &20.34 &18.18 \\
MI &11.85 &14.42 &17.05 &15.18 &0.37 &0.05 &3.04 &11.10 &13.70 &16.50 &14.31 \\
MN &13.03 &15.17 &17.62 &15.59 &0.65 &0.02 &1.77 &130 &15.20 &17.68 &15.58 \\
MO &11.28 &13.65 &16.70 &14.98 &0.33 &0.00 &1.82 &10.96 &13.28 &16.44 &14.59 \\
MS &9.70 &10.80 &12.99 &11.45 &0.06 &0.11 &8.81 &9.01 &10.50 &12.79 &10.88 \\
NC &11.14 &12.14 &13.54 &12.57 &0.12 &0.15 &6.75 &10.63 &11.78 &13.40 &12.33 \\
NH &14.33 &15.91 &17.81 &16.22 &0.64 &0.00 &1.04 &14.29 &15.92 &17.83 &16.22 \\
NJ &15.70 &190 &23.65 &20.05 &0.50 &0.04 &2.71 &14.86 &18.14 &22.69 &18.92 \\
NY &11.24 &12.91 &15.09 &14.47 &0.55 &0.11 &5.85 &11.49 &13.27 &15.58 &14.42 \\
OH &11.51 &13.43 &16.25 &14.56 &0.48 &0.05 &3.53 &11.21 &13.07 &15.73 &13.80 \\
OK &10.55 &11.73 &13.08 &11.94 &0.35 &0.07 &5.47 &10.18 &11.38 &12.71 &11.53 \\
OR &11.17 &12.37 &13.83 &13.15 &0.29 &0.03 &3.58 &10.94 &12.23 &13.82 &13.02 \\
PA &12.53 &15.64 &18.97 &15.94 &0.64 &0.02 &1.88 &11.51 &14.71 &180 &14.72 \\
RI &14.04 &15.57 &16.90 &15.88 &0.63 &0.04 &1.74 &13.74 &15.33 &16.67 &15.43 \\
SC &10.98 &12.74 &14.44 &12.81 &0.20 &0.07 &4.44 &10.97 &12.93 &14.76 &12.92 \\
TN &11.20 &12.50 &14.41 &13.46 &0.18 &0.08 &3.95 &10.83 &12.40 &14.60 &13.24 \\
TX &9.87 &11.86 &14.58 &12.62 &0.17 &0.06 &4.41 &9.94 &12.16 &15.18 &12.83 \\
UT &9.21 &10.32 &12.14 &10.74 &0.43 &0.04 &2.03 &9.15 &10.28 &12.13 &10.79 \\
VA &11.83 &13.26 &15.60 &14.17 &0.25 &0.21 &8.56 &12.05 &13.91 &17.09 &14.62 \\
WA &11.52 &13.40 &16.11 &14.32 &0.26 &0.07 &4.80 &11.88 &13.99 &16.80 &14.91 \\
WI &12.22 &14.11 &16.31 &14.68 &0.46 &0.00 &1.32 &12.48 &14.43 &16.65 &14.90 \\
 \hline
\end{tabular}
\label{tab:posteriors}
\end{table}

For sake of discussion, consider four states of the east coast, namely Connecticut, New Jersey, New York and Pennsylvania, whose posterior mean joint densities are plotted in Figure~\ref{fig:fourjoint}. The estimated joint densities are very different across states. For example, Connecticut presents a posterior mean density which is strongly multimodal for $y_1$, and particularly if we consider the conditional distribution of $y_1$ given $y_2=0$. Indeed, the nonparametric mixtures allow us to estimate conditional densities with different shapes for each of the infinite levels of the count variable. This is also clear from the estimated density for New York which is bimodal for $y_2=0$ and symmetric and unimodal for $y_2 >0$. New Jersey and Pennsylvania have unimodal conditional densities of $y_1$ for each level of $y_2$ with New Jersey also showing a mild skew-to-the right marginal density of $y_2$. Different modes in the marginal densities of $y_1$ may indicate different sub-populations with different economical status across the state.

\begin{figure*}[h]
       \centering
	\subfigure[]{\includegraphics[scale=.35]{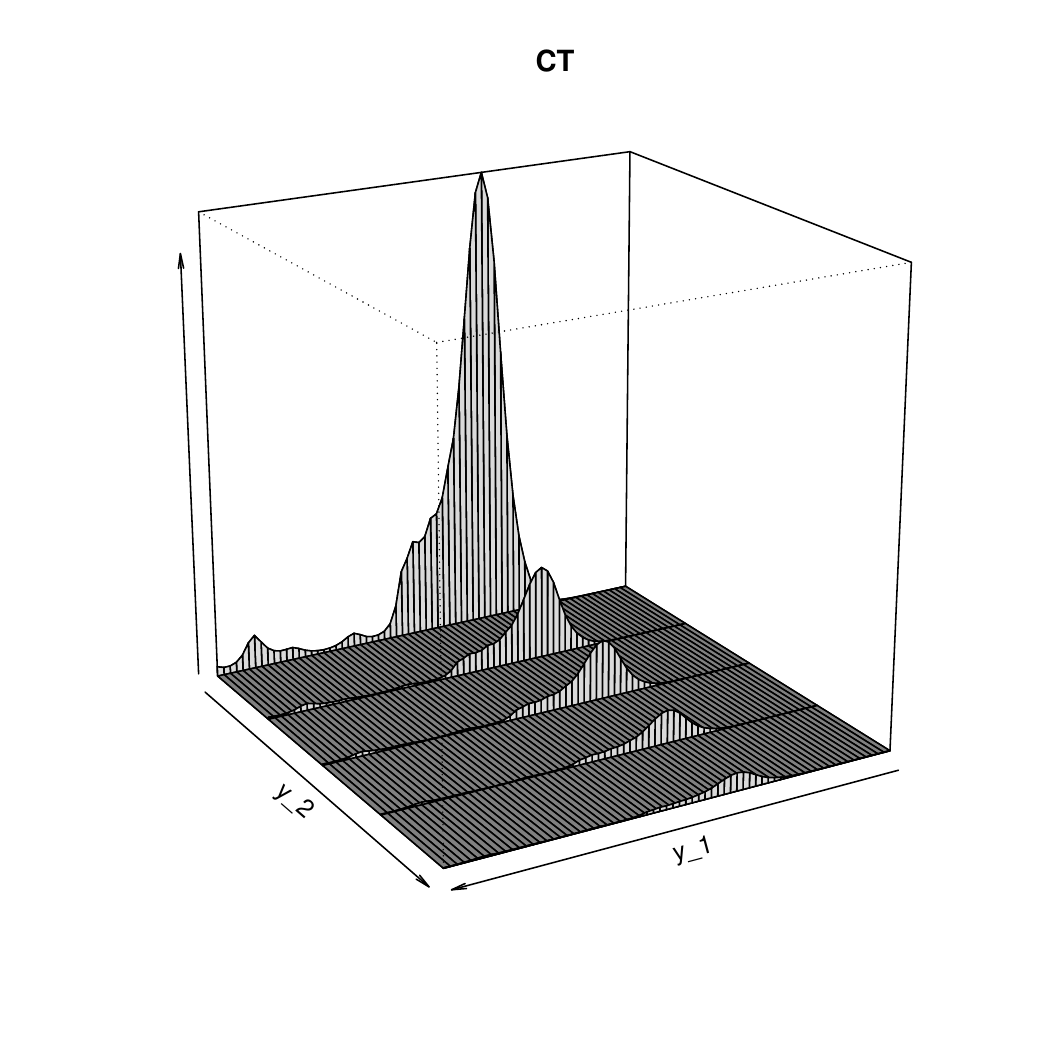}}
	\subfigure[]{\includegraphics[scale=.35]{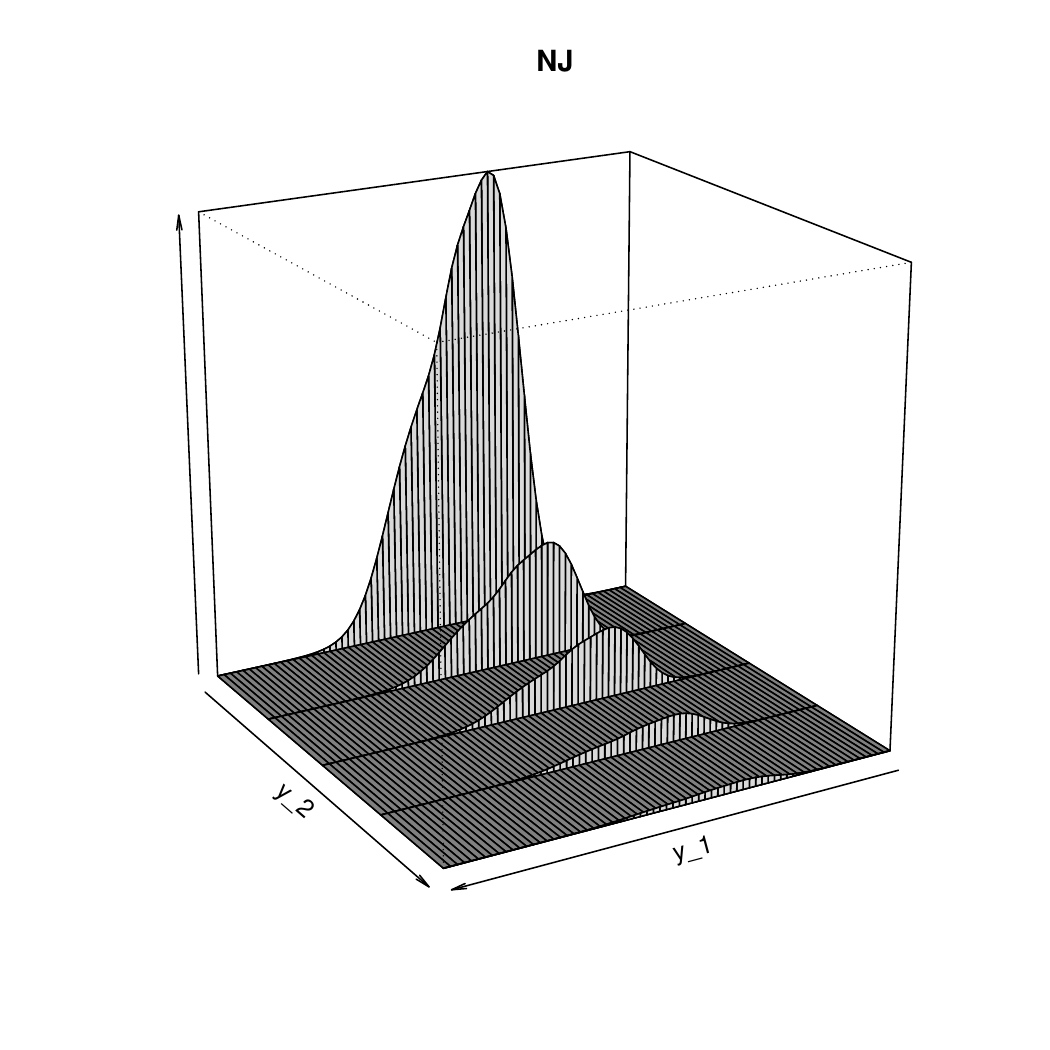}}
	\subfigure[]{\includegraphics[scale=.35]{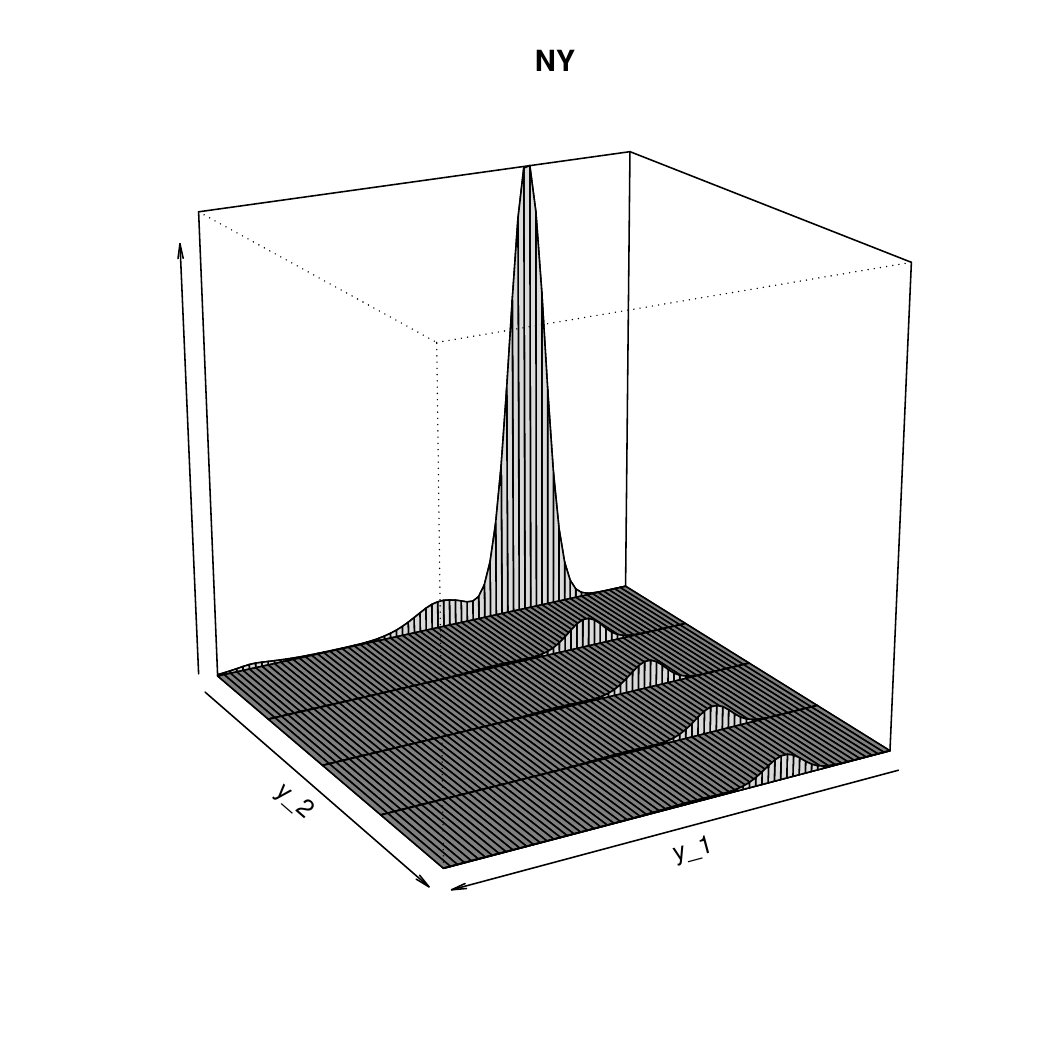}}
	\subfigure[]{\includegraphics[scale=.35]{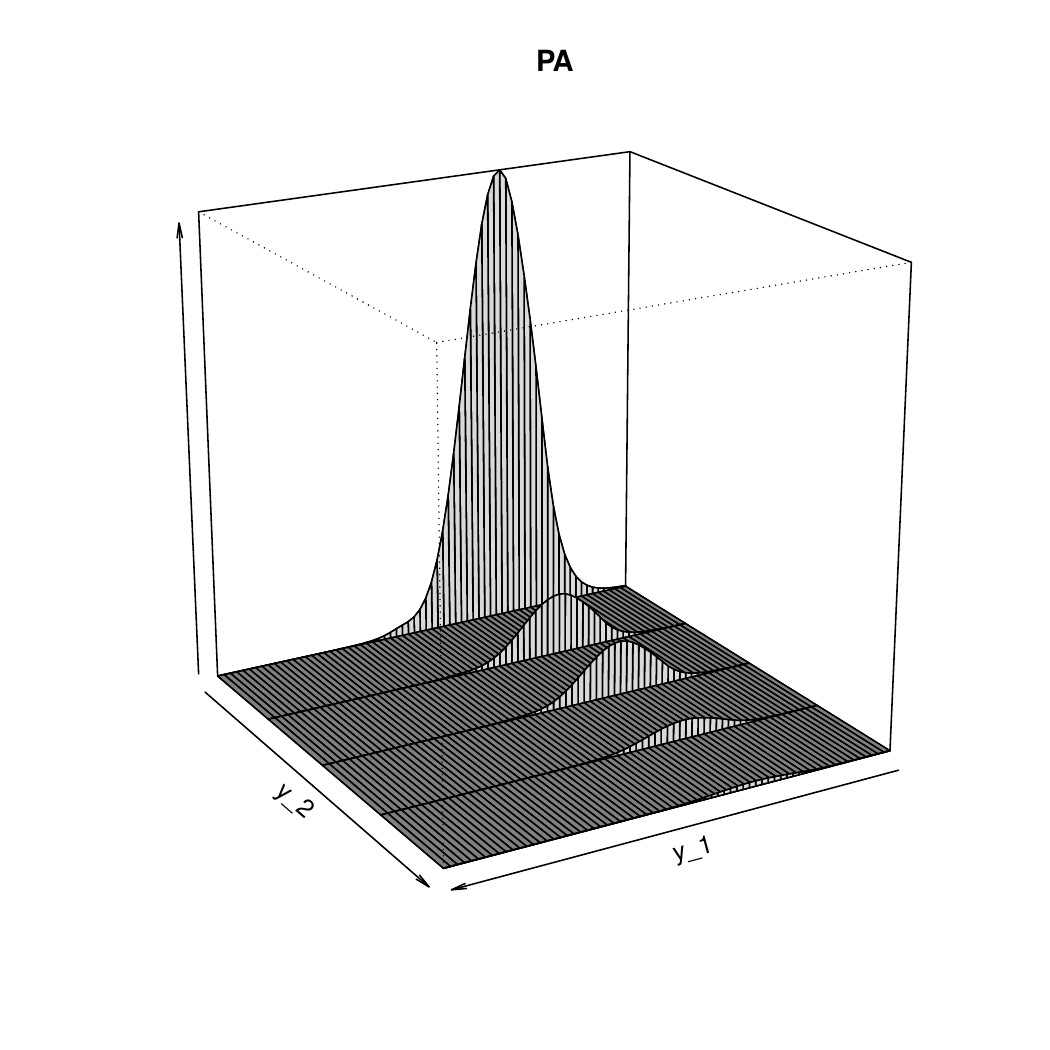}}
\caption{Mean posterior mixed-scale densities for Connecticut (CT), New Jersey (NJ), New York (NY), and Pennsylvania (PA) for $0<y_1<50$ and $y_2=0, \dots, 4$.}
\label{fig:fourjoint}
\end{figure*}

\appendix
\section{Proofs}

\begin{proof}[Proof of Theorem \ref{theo:consistency}]
The next two lemmas are useful to determine the size of the parameter space of $\mathcal F$, measured in terms of $L_1$ metric entropy. The first shows that the $L_1$ topology is maintained under the mapping $g$ and the second bounds the $L_1$ metric entropy of a sieve. 
\begin{lemma}
Assume that the true data generating density is $f_0 \in \mathcal{F}$. Choose any $f_0^*$ such that $f_0 = g(f^*_0)$. 
Let $U(f^*_0)= \{ f^* : ||f^*_0 - f^*|| < \epsilon\}$ be a $L_1$ neighborhood of size $\epsilon$ around $f^*_0$. 
Then the image $g(U(f_0^*))$ contains values $f \in \mathcal{F}$ in a $L_1$ neighborhood of $f_0$ of at most size $\epsilon$.
\label{lem:l1mapping}
\end{lemma}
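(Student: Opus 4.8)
\textbf{Proof plan for Lemma \ref{lem:l1mapping}.}
The plan is to reduce the statement to a single contraction estimate: it suffices to show that the map $g$ is non-expansive in the $L_1$ metric, i.e. that $||g(f_0^*) - g(f^*)|| \leq ||f_0^* - f^*||$ for every $f^* \in \mathcal{F}^*$. Granting this, if $f^* \in U(f_0^*)$ then $||f_0 - g(f^*)|| = ||g(f_0^*) - g(f^*)|| \leq ||f_0^* - f^*|| < \epsilon$, so $g(f^*)$ lies in an $L_1$ ball of radius at most $\epsilon$ around $f_0$, which is exactly the claim. The word ``at most'' in the statement is explained by the fact that the estimate is an inequality rather than an equality, reflecting that marginalizing the latent density over the thresholding regions can only shrink, never enlarge, $L_1$ separations.

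The key steps, in order, are as follows. First I would write out the $L_1$ distance between the two images using the product-measure expression displayed in Section \ref{sec:prelim}, namely $||g(f_0^*) - g(f^*)|| = \int_{\mathcal{Y}} \sum_{y_2 \in Q} |g(f_0^*)(y_1,y_2) - g(f^*)(y_1,y_2)|\, dy_1$, and substitute the definition (\ref{eq:g}) of $g$, so that each summand is an integral over $A_{y_2}$ of the difference $f_0^*(h_1^{-1}(y_1), y_2^*) - f^*(h_1^{-1}(y_1), y_2^*)$ weighted by $|J_{h_1^{-1}(y_1)}|$. Second, I would apply the triangle inequality for integrals to pull the absolute value inside the integral over $A_{y_2}$; this is the single step that converts the identity into the desired inequality. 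Third, I would use that the sets $\{A_{y_2} : y_2 \in Q\}$ form a partition of $\R^{p_2}$, since each $A^{(j)}$ partitions $\R$ and $A_{y_2}$ is the corresponding product cell; hence $\sum_{y_2 \in Q} \int_{A_{y_2}}$ collapses into a single integral $\int_{\R^{p_2}} \cdot\, dy_2^*$. Finally, I would perform the change of variables $y_1^* = h_1^{-1}(y_1)$ in the outer integral: because each $h_{1,j}$ is a monotone one-to-one differentiable bijection of $\R$ onto $\mathcal{Y}_j$, the region $\mathcal{Y}$ maps onto $\R^{p_1}$, and the factor $|J_{h_1^{-1}(y_1)}|\, dy_1$ becomes exactly $dy_1^*$, leaving $\int_{\R^{p_1}} \int_{\R^{p_2}} |f_0^*(y_1^*,y_2^*) - f^*(y_1^*,y_2^*)|\, dy_2^*\, dy_1^* = ||f_0^* - f^*||$.

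The main obstacle, and the only genuinely new ingredient relative to the count-only argument of \citet{cana:duns:2011}, is the careful bookkeeping of the continuous component: verifying that the inverse-Jacobian weight introduced in (\ref{eq:g}) is precisely cancelled by the Jacobian of the substitution $y_1 = h_1(y_1^*)$, and that the image of $\mathcal{Y}$ under $h_1^{-1}$ is all of $\R^{p_1}$ so that the two $L_1$ norms are taken over matching domains. I would make this rigorous by working coordinatewise on the diffeomorphism $h_1$ and invoking Tonelli's theorem to justify interchanging the summation over $y_2$ with the integrations (all integrands being nonnegative after the triangle inequality step). Once the Jacobian cancellation and the partition property are confirmed, the remaining manipulations are routine.
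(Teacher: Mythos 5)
Your proof is correct and is exactly the argument the paper has in mind: the paper omits the proof of Lemma~\ref{lem:l1mapping}, stating only that it ``follows directly from the definition of $L_1$ neighborhood and from Fubini's theorem,'' and your non-expansiveness estimate --- triangle inequality inside the inner integral, Tonelli's theorem to merge the sum over $y_2$ with the integral over the partition $\{A_{y_2} : y_2 \in Q\}$ of $\R^{p_2}$, and the change of variables $y_1 = h_1(y_1^*)$ cancelling the Jacobian factor --- is precisely that omitted verification written out in full. No gap; the Jacobian bookkeeping you flag is indeed the only part requiring care beyond the count-only case of \citet{cana:duns:2011}.
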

The proof is omitted since it follows directly from the definition of $L_1$ neighborhood and from Fubini's theorem.
\begin{lemma}
Let $\mathcal{F}_n^* \subset \mathcal{F}^*$ denote a compact subset of $\mathcal{F}^*$, with $J(\delta,\mathcal{F}_n^*)$
the $L_1$ metric entropy corresponding to the logarithm of the minimum number of $\delta$-sized $L_1$ balls needed
to cover $\mathcal{F}_n^*$.  Letting $\mathcal{F}_n = g( \mathcal{F}_n^* )$, we have $J(\delta,\mathcal{F}_n) \le
J(\delta,\mathcal{F}_n^*)$.
\label{lem:metricentropy}
\end{lemma}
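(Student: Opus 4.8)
The plan is to reduce the entire statement to the non-expansive (one-Lipschitz) character of the map $g$ in the $L_1$ metric, which is exactly the content of Lemma \ref{lem:l1mapping}. Read in contrapositive-free form, that lemma says that for any $f^*, \tilde f^* \in \mathcal{F}^*$ one has $\|g(f^*) - g(\tilde f^*)\| \le \|f^* - \tilde f^*\|$: if $\tilde f^*$ lies in an $L_1$ ball of radius $\epsilon$ about $f^*$, then $g(\tilde f^*)$ lies in an $L_1$ ball of radius at most $\epsilon$ about $g(f^*)$. Concretely this comes from expanding the definition (\ref{eq:g}), pulling the absolute value inside the integral over $A_{y_2}$, summing over the partition $\{A_{y_2}\}_{y_2 \in Q}$ of $\R^{p_2}$ to recover an integral over all of $\R^{p_2}$, and changing variables $y_1^* = h_1^{-1}(y_1)$ so that $|J_{h_1^{-1}(y_1)}|\,dy_1 = dy_1^*$; the mixed-scale $L_1$ norm of the images then collapses into the full $L_1$ norm of $f^* - \tilde f^*$ on $\R^p$. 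With this bound available, the metric entropy inequality is a one-line covering argument.

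First I would fix $\delta > 0$ and take a minimal $\delta$-cover of $\mathcal{F}_n^*$ by $L_1$ balls, with centers $f_1^*, \dots, f_N^*$, where $N = \exp\{J(\delta, \mathcal{F}_n^*)\}$ is the covering number. I then claim that the images $g(f_1^*), \dots, g(f_N^*)$ furnish a $\delta$-cover of $\mathcal{F}_n = g(\mathcal{F}_n^*)$. To see this, take any $f \in \mathcal{F}_n$; by definition $f = g(f^*)$ for some $f^* \in \mathcal{F}_n^*$, and since the $f_i^*$ cover $\mathcal{F}_n^*$ there is an index $i$ with $\|f^* - f_i^*\| < \delta$. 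Applying the contraction bound gives $\|f - g(f_i^*)\| = \|g(f^*) - g(f_i^*)\| \le \|f^* - f_i^*\| < \delta$, so $f$ lies in the $\delta$-ball about $g(f_i^*)$.

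Because every element of $\mathcal{F}_n$ lies within $\delta$ of one of the $N$ points $g(f_i^*)$, the family $\{g(f_i^*)\}_{i=1}^N$ is a $\delta$-cover of $\mathcal{F}_n$ of cardinality at most $N$ (possibly fewer, if some images coincide). Hence the minimal number of $\delta$-balls needed to cover $\mathcal{F}_n$ does not exceed $N$, and taking logarithms yields $J(\delta, \mathcal{F}_n) \le \log N = J(\delta, \mathcal{F}_n^*)$, as claimed.

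I do not expect a substantial obstacle: the whole argument rests on the contraction property of $g$, which is itself an immediate consequence of Fubini's theorem and the change of variables built into (\ref{eq:g}). The only point deserving a moment of care is to work with the covering-number formulation of metric entropy, in which the covering balls may have arbitrary centers in the ambient space $\mathcal{F}^*$; since $g$ is defined on all of $\mathcal{F}^*$, each $g(f_i^*)$ is a legitimate density in $\mathcal{F}$ and hence a valid ball center for $\mathcal{F}_n$, so the passage from a cover of the domain to a cover of the image is unobstructed.
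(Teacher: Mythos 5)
Your proof is correct and follows essentially the same route as the paper: both arguments take a minimal $\delta$-cover of $\mathcal{F}_n^*$, invoke the $L_1$ non-expansiveness of $g$ (the content of Lemma~\ref{lem:l1mapping}, which the paper attributes to Fubini's theorem just as you do), and observe that the images of the ball centers under $g$ give a $\delta$-cover of $\mathcal{F}_n$ of the same cardinality, so the minimal cover can only be smaller. Your write-up is somewhat more explicit than the paper's (stating the Lipschitz bound $\|g(f^*)-g(\tilde f^*)\|\le\|f^*-\tilde f^*\|$ and sketching its derivation), but the underlying argument is identical.
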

\begin{proof}[Proof of Lemma \ref{lem:metricentropy}]
Let $k = \exp\{ J(\delta,\mathcal{F}_n^*) \}$ be the number of $\delta$ balls needed to cover $\mathcal{F}_n^*$, with $f_1^*,\ldots,f_k^*$ denoting the centers of these balls so that $\mathcal{F}_n^* \subset \bigcup_{i=1}^k \mathcal{F}_{n,i}^*$, where $\mathcal{F}_{n,i}^* = \{ f^*: ||f^* - f_i^*|| < \delta \}$.  
From Lemma~\ref{lem:l1mapping}, it is clear we can define $\mathcal{F}_n \subset \bigcup_{i=1}^k \mathcal{F}_{n,i}$ where $\mathcal{F}_{n,i} = g( \mathcal{F}_{n,i}^* )$ is an $L_1$ neighborhood
around $f_i = g(f_i^*)$ of size at most $\delta$.  This defines a covering of $\mathcal{F}_n$ using $k$ $\delta$-sized $L_1$
balls, but this is not necessarily the minimal covering possible and hence $J(\delta,\mathcal{F}_n^*)$ provides an upper
bound on $J(\delta,\mathcal{F}_n)$.
\end{proof}

The rest of the proof follows along almost the same lines of \citet{ghos:etal:1999} in showing that the sets $\mathcal{F}_n \cap \{f: ||f-f_0||< \epsilon\}$ and $\mathcal{F}_n^{C}$ satisfy the conditions of an unpublished result of Barron (see Theorem 4.4.3 of \citet{book:ghos:rama}).
\end{proof}
\begin{proof}[Proof of Theorem \ref{theo:rate}]
Let $\mathcal{F}_n = g(\mathcal{F}^*_n)$. From Lemma~\ref{lem:metricentropy} we have $J(\delta,\mathcal{F}_n) \le J(\delta,\mathcal{F}_n^*)$.
Let $D(\epsilon, \mathcal{F})$ the $\epsilon$-packing number of $\mathcal F$, i.e. is the maximal number of points in $\mathcal F$ such that the distance between every pair is at least $\epsilon$. For every $\epsilon > \epsilon_n$, using $(iii)$ we have 
\[
\log D(\epsilon/2, \mathcal{F}) < \log D(\epsilon_n, \mathcal{F}^*) < C n \epsilon^2_n.
\]
Therefore applying Theorem 7.1 of \citet{ghos:etal:2000} with $j=1$, $D(\epsilon) = \exp(n \epsilon^2_n)$ and $\epsilon = M \epsilon_n$ with $M >2$ there exist a sequence of tests $\{\Phi_n\}$ that, for a universal constant $K$, satisfies
\begin{align}
& E_{f_0}\{ \Phi_n \} \leq \frac{\exp\{-(KM^2-1) n \epsilon^2_n\}}{1-\exp\{-KnM^2 \epsilon^2_n\}},\nonumber \\
& \sup_{f \in U^C \cap \mathcal{F}_n} E_{f}\{1 - \Phi_n \} \leq  \exp\{-K n M^2 \epsilon^2_n\}.
\label{eq:test}
\end{align}
The posterior probability assigned to $U^C$ can be written as 	
\begin{align*}
	 & \Pi \left\{ U^C \mid y_1, \dots, y_n	\right\}   = \\ 
		& \phantom{\Pi\{U^C\}} \frac{
			\int_{U^C \cap \mathcal{F}_n} \prod_{i=1}^n \frac{f(y_i)}{f_0(y_i)} \de \Pi(f)+
			\int_{U^C \cap \mathcal{F}_n^C} \prod_{i=1}^n \frac{f(y_i)}{f_0(y_i)} \de \Pi(f) 
                  }{
			\int \prod_{i=1}^n \frac{f(y_i)}{f_0(y_i)} \de \Pi(f) 
			}  \\
		& \phantom{\Pi\{U^C\}}
		\leq \Phi_n +  
		\frac{
			(1-\Phi_n)\int_{U^C \cap \mathcal{F}_n} \prod_{i=1}^n \frac{f(y_i)}{f_0(y_i)} \de \Pi(f)
			 }{
			\int \prod_{i=1}^n \frac{f(y_i)}{f_0(y_i)} \de \Pi(f) 
			}+ \\
		& \phantom{\Pi\{U^C\} \leq } + \frac{\int_{U^C \cap \mathcal{F}_n^C} \prod_{i=1}^n \frac{f(y_i)}{f_0(y_i)} \de \Pi(f)
                  }{
			\int \prod_{i=1}^n \frac{f(y_i)}{f_0(y_i)} \de \Pi(f) 
			} .
\end{align*}

Taking $KM^2 -1> K$ the first summand $E_{f_0}\{ \Phi_n \} \leq 2 \exp\{-K n \epsilon^2_n\}$ by (\ref{eq:test}).
The rest of the proof consists in proving that the remaining equation goes to zero in $P_{f_0}$-probability.
By Fubini's theorem and \eqref{eq:test} we have
\begin{align*}
E_{f_0}\left\{ (1-\Phi_n)\int_{U^C \cap \mathcal{F}_n} \prod_{i=1}^n \frac{f(y_i)}{f_0(y_i)} \de \Pi(f) \right\}  
& \leq  \sup_{f \in U^C \cap \mathcal{F}_n} E_{f}\{1 - \Phi_n \} \\
& \leq  \exp\{-K n M^2 \epsilon^2_n\},
\end{align*}
while by $(iv)$ we have 
\begin{align*}
E_{f_0}\left\{ \int_{U^C \cap \mathcal{F}_n^C} \prod_{i=1}^n \frac{f(y_i)} {f_0(y_i)} \de \Pi(f) \right\} 
& \leq \Pi(\mathcal{F}_n^C) \\
& = \Pi^*(\mathcal{F}_n^{*C})  \leq \exp\{- n \epsilon^2_n (C + 4) \}.
\end{align*}
The numerator of the second summand is hence exponentially small for $M > \sqrt{(C+4)/K}$.
Finally we need to lower bound the denomirator. Clearly $g(B^*_n) \subseteq B_n$ with
\[
 B_n = \left\{f : \int f_0 \log(f_0/f) d\mu \leq \epsilon^2_n, \int f_0 (\log(f_0/f))^2 d\mu \leq \epsilon^2_n \right\}
\]
and then $\Pi(B_n) \geq \Pi(g(B^*_n)) = \Pi^*(B^*_n)$ and using condition $(v)$ on $\Pi^*(B_n^*)$ we have
\begin{eqnarray*}
	& \int_{B_n} \int f_0 \log(f_0/f) d\mu d\Pi(f)  \leq \int_{B_n} \epsilon_n^2 d\Pi(f) \\
	& \int_{B_n} \int f_0 \left(\log(f_0/f)\right)^2 d\mu d\Pi(f)  \leq \int_{B_n} \epsilon_n^2 d\Pi(f),
\end{eqnarray*}
and hence
\begin{align*}
 \int \prod_{i=1}^n \frac{f(y_i)}{f_0(y_i)} \de \Pi(f)  
 &\geq \int_{B_n} \prod_{i=1}^n \frac{f(y_i)}{f_0(y_i)} \de \Pi(f) \\
 &\geq \exp(-2n \epsilon^2_n) \Pi(B_n) \\
 &\geq \exp(-2n \epsilon^2_n) \Pi^*(B_n^*) \\
 & \geq \exp\{-n\epsilon^2_n (C+2)\}
\end{align*}
Then using Lemma 8.1 of \citet{ghos:etal:2000} we obtain 
\[
	E_{P_0} \int \prod_{i=1}^n \frac{f(y_i)}{f_0(y_i)} \de \Pi(f) \to 1
\]
that concludes the proof.
\end{proof}

\begin{proof}[Proof of Lemma~\ref{lem:minimax}]
If $q_j < \infty$ for all $j = 1, \dots, p_2$, the $p_2$ categorical variables can be combined into a single categorical variable, say $\tilde{y}_2$, with $q= \prod_{j=1}^{p_2} q_j$ levels. To estimate the probability mass function of a categorical variables with finite number of levels, the minimax rate is $n^{-1/2}$, i.e, for $n\to \infty$
\begin{equation*}
	  |p_{0j} - \hat{p}_{j} | =  O(n^{-1/2}),
\end{equation*}
where $\hat{p}_{j}$  and $p_{0j}$ are the point estimate and true marginal probability masses for level $j$, respectively.
Since also $q$ is finite, the density of the $p_1$ continuous variables can be estimated conditionally on each level of $\tilde{y}_2$. 
The minimax optimal rate for each conditional density is clearly $n^{-\alpha/(2\alpha + p_1)}$, i.e., for $n\to \infty$
\begin{equation*}
	 \int_{\mathcal{Y}} |f_0(y_1| \tilde{y}_2 = j) - \hat{f}(y_1|\tilde{y}_2 = j)| d y_1 =  O(n^{-\frac{\alpha}{2\alpha+p_1}}),
\end{equation*}
where $\hat{f}(y_1|\tilde{y}_2 = j)$ is a point estimate of the conditional density for $y_1$ given $\tilde{y_2} = j$ and $f_0(y_1|\tilde{y}_2 = j)$ is the true conditional density. For fixed $\tilde{y_2} = j$, we have
\begin{align*}
	 \int_{\mathcal{Y}} |f_0(y_1, & \tilde{y}_2 )  - \hat{f}(y_1, \tilde{y}_2)| d y_1  \\
			= &\int_{\mathcal{Y}} \left|f_0(y_1, \tilde{y}_2 )  - \hat{f}(y_1, \tilde{y}_2) \pm f_0(y_1, \tilde{y}_2) \frac{\hat{p}_j}{p_{0j}} \right| d y_1  \\
			\leq& \int_{\mathcal{Y}} \left|f_0(y_1, \tilde{y}_2 )  - f_0(y_1, \tilde{y}_2) \frac{\hat{p}_j}{p_{0j}} \right | d y_1 \\ 
	& + \int_{\mathcal{Y}} \left| f_0(y_1, \tilde{y}_2) \frac{\hat{p}_j}{p_{0j}} - \hat{f}(y_1, \tilde{y}_2) \right| d y_1  \\
	= & \int_{\mathcal{Y}} \left| \frac{\hat{p}_j - p_{0j}}{p_{0j}} f_0(y_1, \tilde{y}_2 ) \right| d y_1 \\ 
	& + \int_{\mathcal{Y}} \left| \hat{f}(y_1| \tilde{y}_2) - f_0(y_1| \tilde{y}_2)  \right| d y_1  \\
	= &  \,\, O(n^{-1/2}) + O(n^{\frac{\alpha}{2\alpha+p_1}}) =  O(n^{-\frac{\alpha}{2\alpha+p_1}}).
\end{align*}
Hence the minimax optimal rate for the joint density is $n^{-\alpha/(2\alpha + p_1)}$.
\end{proof}

\section*{Acknowledgements}

The authors would like to thank the reviewers for their comments that help improve the manuscript. This research was partially supported by grant R01 ES017240-01 from the National Institute of Environmental Health Sciences of the National Institutes of Health.

\bibliographystyle{apalike}
\bibliography{biblio}

\end{document}